\documentclass[11pt]{article}
\usepackage{amsmath,amssymb}

\newtheorem{propo}{{\bf Proposition}}[section]
\newtheorem{coro}[propo]{{\bf Corollary}}
\newtheorem{lemma}[propo]{{\bf Lemma}} \newtheorem{theor}[propo]{{\bf
Theorem}} \newtheorem{ex}{{\sc Example}}[section]

\newenvironment{proof}{{\bf Proof.}}{$\Box$}

\begin{document}

\vspace*{1.0in}

\begin{center} THE INDEX COMPLEX OF A MAXIMAL SUBALGEBRA OF A LIE ALGEBRA 
\end{center}
\bigskip

\begin{center} DAVID A. TOWERS 
\end{center}
\bigskip

\begin{center} Department of Mathematics and Statistics

Lancaster University

Lancaster LA1 4YF

England

d.towers@lancaster.ac.uk 
\end{center}
\bigskip

\begin{abstract}
Let $M$ be a maximal subalgebra of the Lie algebra $L$. A subalgebra $C$ of $L$ is said to be a {\em completion} for $M$ if $C$ is not contained in $M$ but every proper subalgebra of $C$ that is an ideal of $L$ is contained in $M$. The set $I(M)$ of all completions of $M$ is called the {\em index complex} of $M$ in $L$. We use this concept to investigate the influence of the maximal subalgebras on the structure of a Lie algebra, in particular finding new characterisations of solvable and supersolvable Lie algebras.
\par
\noindent {\em Mathematics Subject Classification 2000}: 17B05, 17B20, 17B30, 17B50.
\par
\noindent {\em Key Words and Phrases}: Lie algebras, maximal subalgebra, index complex, ideal index, solvable, supersolvable, Frattini ideal. 
\end{abstract}

\section{Introduction}
\medskip

Let $M$ be a maximal subalgebra of the Lie algebra $L$. A subalgebra $C$ of $L$ is said to be a {\em completion} for $M$ if $C$ is not contained in $M$ but every proper subalgebra of $C$ that is an ideal of $L$ is contained in $M$. The set $I(M)$ of all completions of $M$ is called the {\em index complex} of $M$ in $L$. This is analogous to the concept of the index complex of a maximal subgroup of a finite group as introduced by Deskins in \cite{des1}; this concept has since been further studied by a number of authors, including Ballester-Bolinches and Ezquerro (\cite{be}), Beidleman and Spencer (\cite{bs}), Deskins (\cite{des2}), Mukherjee (\cite{muck}), and Mukherjee and Bhattacharya (\cite{mb}). 
\par

There are a number of interesting results concerning the question of what certain intrinsic properties of the maximal subalgebras of a Lie algebra $L$ imply about the structure of $L$ itself. For example:
\begin{itemize}
\item[(1)] all maximal subalgebras are ideals of $L$ if and only if $L$ is nilpotent (see \cite{barnes1});
\item[(2)] all maximal subalgebras of $L$ are c-ideals of $L$ if and only if $L$ is solvable (see \cite{cideal});
\item[(3)] if $L$ is solvable then all maximal subalgebras have codimension one in $L$ if and only if $L$ is supersolvable (see \cite{barnes});
\item[(4)] $L$ can be characterised when its maximal subalgebras satisfy certain lattice-theoretic conditions, such as modularity (see \cite{var}).
\end{itemize}
The main purpose of this paper is to seek further such results; in particular, to find a characterisation of supersolvable Lie algebras amongst all Lie algebras, rather than just amongst the solvable ones as in (3) above. We are also seeking a characterisation of solvable Lie algebras in terms of the `size' of their maximal subalgebras. However, since, of course, the maximal subalgebras can have different codimensions in $L$, some other measure of their size is needed. The development of the theory follows closely that of its group-theoretic counterpart, but the proofs are usually different and stronger results can be obtained. 
\par

We define the {\em strict core} (resp. {\em core}) of a subalgebra $B \neq 0$ to be the sum of all ideals of $L$ that are proper subalgebras (resp. subalgebras) of $B$, and denote it by $k(B)$ or $k_L(B)$ (resp. $B_L$). Notice that the strict core can differ from the core even when $B$ is an ideal of $L$: for example, if $B$ is a one-dimensional ideal of $L$. It is easy to see that the strict core of any completion $C$ is a proper subalgebra of $C$. The subalgebra $C$ is then a completion of the maximal subalgebra $M$ of $L$ (that is, $C \in I(M)$) if $L = <M, C>$ and $k(C) \subseteq M$. Completions always exist, as the following lemma establishes.  
\bigskip

\begin{lemma}\label{l:index} If $M$ is a maximal subalgebra of $L$ then $I(M)$ is non-empty; in fact, $I(M)$ contains an ideal of $L$.
\end{lemma}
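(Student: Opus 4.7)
The plan is to produce a completion by taking an ideal of $L$ which is minimal among those not contained in $M$, and verify both clauses of the definition directly.

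First I would observe that the collection
\[
\mathcal{S} = \{\,J \trianglelefteq L : J \not\subseteq M\,\}
\]
is non-empty, since $L \in \mathcal{S}$ (recall $M$ is a proper subalgebra because it is maximal). Working, as the paper does implicitly, in the finite-dimensional setting, $\mathcal{S}$ admits an element $C$ that is minimal with respect to inclusion.

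Next I would check that this $C$ is a completion of $M$. Since $C \not\subseteq M$, the subalgebra $\langle M, C\rangle$ strictly contains $M$; by the maximality of $M$ this forces $\langle M, C\rangle = L$, which is the first requirement. For the second requirement, let $D$ be any proper subalgebra of $C$ that happens to be an ideal of $L$. Then $D$ is an ideal of $L$ strictly smaller than $C$, so the minimality of $C$ in $\mathcal{S}$ excludes $D$ from $\mathcal{S}$; that is, $D \subseteq M$. Equivalently, the strict core $k(C)$, being the sum of all such $D$, lies in $M$. Hence $C \in I(M)$, and by construction $C$ is an ideal of $L$.

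There is essentially no obstacle: the argument is a one-line application of minimality, and the only thing to be careful about is the underlying hypothesis of finite-dimensionality which guarantees that the family $\mathcal{S}$ has a minimal element. The proof shows slightly more than stated, namely that one may always take the ideal in $I(M)$ to be minimal among ideals of $L$ not contained in $M$; this observation will presumably be useful in later applications of the lemma.
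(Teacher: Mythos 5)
Your argument is correct and is essentially the paper's own proof: both take $C$ minimal among ideals of $L$ not contained in $M$ (non-empty since $L$ itself qualifies, and finite-dimensionality gives a minimal element), then deduce $\langle M,C\rangle = L$ from the maximality of $M$ and $k(C)\subseteq M$ from the minimality of $C$. You merely spell out the verification in more detail than the paper does.
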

\begin{proof} Clearly the set of ideals of $L$ which do not lie in $M$ is a non-empty partially ordered set; choose $C$ to be a minimal element of this set. Then $L = M + C = <M, C>$ and $k(C) \subseteq M$, so $C \in I(M)$.
\end{proof}
\bigskip

In section two we study completions that are ideals and show that if $C$ and $D$ are two such completions of the same maximal subalgebra $M$ then $C/k(C) \cong D/k(D)$. This allows us to define the {\em ideal index} of $M$ in $L$. Next we characterise solvable and supersolvable Lie algebras in terms of the ideal index of their maximal subalgebras. We then consider completions $C$ that are ideals and for which $C/k(C)$ is abelian. A characterisation of solvable Lie algebras and of the solvable radical is given in terms of such completions.
\par

In section three maximal completions are studied. It is shown that over an algebraically closed field a Lie algebra is solvable if and only if every maximal subalgebra of $L$ has an abelian maximal completion. This is analogous to a result of Deskins (see \cite{des2}) for groups. The Lie algebraic result, however, is false if the underlying field is not algebraically closed, even if it has characteristic zero.
\par

The final section is devoted to subideal completions. A characterisation of supersolvable Lie algebras in terms of such completions is found that is analogous to a group-theoretic result of Ballester-Bolinches and Ezquerro (see \cite{be}). The Lie algebraic proof, however, is completely different, as the group theoretic result relies on properties that do not hold in the case of Lie algebras. A counter-example is also given to part of \cite[Corollary 2]{be}.
\par
Throughout, $L$ will denote a finite-dimensional Lie algebra over a field $F$. If $A$ and $B$ are subalgebras of $L$ for which $L = A + B$ and $A \cap B = 0$ we will write $L = A \oplus B$. The ideals $L^{(k)}$ of the derived series are defined inductively by $L^{(0)} = L$, $L^{(k+1)} = [L^{(k)},L^{(k)}]$  for $k \geq 0$; we also write $L^2$ for $L^{(1)}$. If $A$ is a subalgebra of $L$, the {\em centralizer} of $A$ in $L$ is $C_{L}(A) = \{ x \in L : [x, A] = 0\}$. 
\bigskip

\section{Ideal completions and the ideal index}
\medskip

If $C$ is an ideal of $L$ and $C \in I(M)$ we call $C$ an {\em ideal completion} of $L$. In this case $C/k(C)$ is a chief factor of $L$ which is avoided by $M$. Up to isomorphism it is uniquely determined by $M$, as is shown by the following result whose proof is based on that of \cite[Lemma 1]{bs}.
\bigskip 

\begin{theor}\label{t:isom} Let $C$ and $D$ be ideal completions of the maximal subalgebra $M$ of $L$. Then $C/k(C) \cong D/k(D)$.
\end{theor}
\begin{proof} Let $L$ be a Lie algebra of minimal dimension having a maximal subalgebra $M$ with two ideal completions $C$ and $D$ such that $C/k(C) \not \cong D/k(D)$. If $k(C) \cap k(D) \neq 0$ then by factoring out $k(C) \cap k(D)$ and using the minimality of $L$ we see that $C/k(C) \cong D/k(D)$, which is a contradiction. Hence $k(C) \cap k(D) = 0$. Now $C \cap k(D)$ is an ideal of $L$ and  $C \cap k(D) \subseteq C \cap M$, so $C \cap k(D) \subseteq k(C)$. It follows that $C \cap k(D) \subseteq k(C) \cap k(D) = 0$. Similarly $D \cap k(C) = 0$.
\par
Next we claim that $(C + k(D))/(k(C) + k(D))$ is a ideal completion to $M/(k(C) + k(D))$ in $L/(k(C) + k(D))$. For, certainly it is an ideal supplement. Suppose that $X/(k(C) + k(D))$ is an ideal of $L/(k(C) + k(D))$ properly contained in $(C + k(D))/(k(C) + k(D))$. Then $X \cap C$ is an ideal of $L$ properly contained in $C$, since $X \cap C = C$ implies that $C \subseteq X$ and $X = C + k(D)$. It follows that $X = X \cap (C + k(D)) = X \cap C + k(D) \subseteq M$, proving the claim. 
\par
Similarly $(D + k(C))/(k(C) + k(D))$ is an ideal completion to $M/(k(C) + k(D))$ in $L/(k(C) + k(D))$. But now if $k(C) + k(D) \neq 0$ the minimality of $L$ implies that $(C + k(D))/(k(C) + k(D)) \cong (D + k(C))/(k(C) + k(D))$. But $(C + k(D))/(k(C) + k(D)) \cong C/k(C)$ and $(D + k(C))/(k(C) + k(D)) \cong D/k(D)$ so this is a contradiction.
\par
We therefore have $k(C) + k(D) = 0$ and $C, D$ are minimal ideals of $L$. Then $L = M + C = M + D$ and $CD \subseteq C \cap D = 0$. But now $[M \cap C, L] = [M \cap C, M + D] \subseteq M \cap C$ so $M \cap C$ is an ideal of $L$. It follows that $M \cap C = 0$. Similarly $M \cap D = 0$. Thus
$$ C \cong \frac{C + D}{D} = \frac{(C + D) \cap (M + D)}{D} = \frac{(C + D) \cap M + D}{D} \cong (C + D) \cap M.
$$
Similarly, $D \cong (C + D) \cap M$, whence $C \cong D$. This contradiction establishes the result.
\end{proof}
\bigskip

The {\em ideal index} of a maximal subalgebra $M$ in $L$, $\eta(L:M)$, is the dimension of $C/k(C)$, where $C$ is an ideal completion of $M$ in $L$. 
\bigskip

\begin{coro}\label{c:unique} If $M$ is a maximal subalgebra of $L$ then $\eta(L:M)$ is well-defined.
\end{coro}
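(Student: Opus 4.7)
The plan is to observe that the corollary follows immediately by combining the two preceding results: Lemma~\ref{l:index} guarantees existence and Theorem~\ref{t:isom} guarantees uniqueness up to isomorphism, which in particular fixes the dimension.

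More precisely, I would argue as follows. First, by Lemma~\ref{l:index}, $I(M)$ is non-empty and contains at least one ideal of $L$, so at least one ideal completion $C$ exists and the quantity $\dim(C/k(C))$ can actually be formed. This step is essentially free; it only needs to be noted to make sense of the definition.

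Second, suppose $C$ and $D$ are both ideal completions of $M$ in $L$. Then Theorem~\ref{t:isom} gives $C/k(C) \cong D/k(D)$ as Lie algebras (in fact this isomorphism can be regarded as an isomorphism of $F$-vector spaces), and therefore $\dim(C/k(C)) = \dim(D/k(D))$. Hence the integer $\eta(L:M)$ depends only on $M$ and $L$, not on the particular ideal completion chosen.

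There is no real obstacle here: the corollary is a direct packaging of the existence lemma and the isomorphism theorem into the statement that a numerical invariant is well-defined. The only thing to be careful about is to mention existence explicitly (via Lemma~\ref{l:index}), since otherwise the definition of $\eta(L:M)$ could in principle be vacuous.
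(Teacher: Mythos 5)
Your proposal is correct and is exactly the argument the paper intends: the corollary is stated without proof as an immediate consequence of Theorem~\ref{t:isom} (uniqueness of $\dim(C/k(C))$) together with Lemma~\ref{l:index} (existence of an ideal completion). Nothing is missing.
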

\bigskip

Next we establish how the ideal index behaves with respect to factor algebras. The following result, or rather its corollary, is analogous to \cite[Lemma 2]{bs}, though our proof is somewhat different.
\bigskip

\begin{propo}\label{p:fac} Let $M$ be a maximal subalgebra of $L$ and let $B$ be an ideal of $L$ with $B \subseteq M$. Let $C/B$ be an ideal completion of $M/B$ in $L/B$, put $k(C/B) = K/B$, and let $D$ be an ideal completion of $M$ in $L$. Then $C/K \cong D/k(D)$.
\end{propo}
\begin{proof} In view of Theorem \ref{t:isom} we can assume that $D \subseteq C$. We have that $(k(C) + B)/B$ is an ideal of $L$ which is inside $M/B$ and $C/B$, so $k(C) \subseteq K \cap D$. But also $K \cap D$ is an ideal of $L$ and $K \cap D \subseteq M \cap D$ so $K \cap D \subseteq k(C)$. Hence $k(C) = K \cap D$. But now $D \not \subseteq K$ since $K \subseteq M$, so $K + D = C$. This yields that $C/K = (K + D)/K \cong D/D\cap K = D/k(D)$.
\end{proof}
\bigskip

\begin{coro}\label{c:etafac} Let $M$ be a maximal subalgebra of $L$ and let $B$ be an ideal of $L$ with $B \subseteq M$. Then $\eta(L/B:M/B)= \eta(L:M)$.
\end{coro}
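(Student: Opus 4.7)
The plan is to derive the corollary as an essentially immediate consequence of Proposition \ref{p:fac} together with Lemma \ref{l:index} and Corollary \ref{c:unique}. The key observation is that Proposition \ref{p:fac} has already produced an isomorphism $C/K \cong D/k(D)$ relating ideal completions in $L/B$ to ideal completions in $L$, and the ideal index is by definition the dimension of exactly such a quotient.

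First I would check that the statement makes sense, i.e. that $M/B$ is a maximal subalgebra of $L/B$; this is standard from the correspondence between subalgebras of $L/B$ and subalgebras of $L$ containing $B$, together with the maximality of $M$ in $L$. Then I would invoke Lemma \ref{l:index} twice: once to obtain an ideal completion $D$ of $M$ in $L$, and once (applied to $L/B$) to obtain an ideal completion $C/B$ of $M/B$ in $L/B$. Setting $k(C/B) = K/B$, the definition of the ideal index together with Corollary \ref{c:unique} gives
\[
\eta(L:M) = \dim(D/k(D)) \qquad \text{and} \qquad \eta(L/B:M/B) = \dim\bigl((C/B)\big/(K/B)\bigr) = \dim(C/K).
\]

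Proposition \ref{p:fac} applied to this data yields $C/K \cong D/k(D)$, so the two dimensions coincide and the corollary follows. There is no real obstacle here: the substantive work has been done in Proposition \ref{p:fac} (which handles the nontrivial compatibility between cores of completions in $L$ and cores of completions in $L/B$), and once Corollary \ref{c:unique} is in hand to make the index well-defined, the present corollary is purely a matter of unwinding the definitions.
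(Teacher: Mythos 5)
Your proposal is correct and follows essentially the same route as the paper: both derive the corollary directly from the isomorphism $C/K \cong D/k(D)$ of Proposition \ref{p:fac}, reading off the equality of dimensions. Your additional explicit appeals to Lemma \ref{l:index} (existence of the completions) and Corollary \ref{c:unique} (well-definedness of the index) are implicit in the paper's version and do no harm.
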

\begin{proof} From Proposition \ref{p:fac} we have that 
$$\eta(L/B:M/B)= \dim((C/B)/k(C/B)) = \dim(C/K)$$ 
$$= \dim(D/k(D)) =\eta(L:M).$$
\end{proof}
\bigskip

A subalgebra $B$ of a Lie algebra $L$ is called a {\em c-ideal} of $L$ if there is an ideal $C$ of $L$ such that $L = B + C$ and $B \cap C \leq B_L$, where $B_L$ is the largest ideal of $L$ contained in $B$. This concept was introduced and studied in \cite{cideal}. We have the following characterisation of maximal subalgebras that are c-ideals in terms of the ideal index.
\bigskip

\begin{theor}\label{t:cideal} Let $M$ be a maximal subalgebra of $L$. Then $M$ is a c-ideal of $L$ if and only if $\eta(L:M) = \dim(L/M)$.
\end{theor}
\begin{proof} ($\Rightarrow$) Let $L$ be a Lie algebra of smallest dimension having a maximal subalgebra $M$ which is a c-ideal but for which $\eta(L:M) \neq \dim(L/M)$. If $M_L \neq 0$ then $M/M_L$ is a c-ideal of $L/M_L$, by \cite[Lemma 2.1]{cideal}, and so $\eta(L/M_L:M/M_L) = \dim((L/M_L)/(M/M_L))$, by the minimality of $L$. But then $\eta(L:M) = \dim(L/M)$, by Corollary \ref{c:etafac}, a contradiction. If $M_L = 0$, by assumption there is an ideal $C$ of $L$ such that $L = M + C$ and $M \cap C \subseteq M_L = 0$. Since $M$ is a maximal subalgebra of $L$, $C$ is a minimal ideal of $L$. But then $\eta(L:M)  = \dim C = \dim(L/M)$. This contradiction yields the required implication.
\par
($\Leftarrow$) Now let $L$ be a Lie algebra of smallest dimension having a maximal subalgebra $M$ for which $\eta(L:M) = \dim(L/M)$ but $M$ is not a c-ideal of $L$. If $L$ is simple then $\eta(L:M) = \dim L$. But then $M = 0$ and $M$ is a c-ideal of $L$, a contradiction. If $M_L \neq 0$ then $M/M_L$ is a c-ideal of $L/M_L$, by the minimality of $L$, giving that $M$ is a c-ideal of $L$, by \cite[Lemma 2.1]{cideal}, a contradiction again. Thus we have that $L$ is a non-simple Lie algebra with $M_L = 0$. Let $B$ be a minimal ideal of $L$. Then $L = M + B$ and $\eta(L:M) = \dim B$. By our assumption then $\dim(L/M) = \dim B$. It follows that $M \cap B = 0$ and $M$ is a c-ideal of $L$. This contradiction again establishes the implication. 
\end{proof}
\bigskip

This yields the following characterisation of solvable Lie algebras in terms of the ideal index. The group-theoretic counterpart is \cite[Corollary, page 97]{bs}; its proof uses concepts that have no analogue in Lie algebras.
\bigskip

\begin{coro}\label{c:solv} The Lie algebra $L$ is solvable if and only if $\eta(L:M) = \dim(L/M)$ for all maximal subalgebras $M$ of $L$.
\end{coro}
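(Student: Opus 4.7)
The plan is to deduce this corollary directly from Theorem \ref{t:cideal} combined with item (2) of the introduction, namely the characterisation from \cite{cideal} that a Lie algebra $L$ is solvable if and only if every maximal subalgebra of $L$ is a c-ideal of $L$. Theorem \ref{t:cideal} translates the c-ideal property of a single maximal subalgebra $M$ into the numerical equality $\eta(L:M) = \dim(L/M)$, so quantifying this equivalence over all maximal subalgebras of $L$ should give exactly the corollary.

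In detail, for the forward direction I would assume $L$ is solvable, invoke (2) to conclude that every maximal subalgebra $M$ of $L$ is a c-ideal, and then apply Theorem \ref{t:cideal} to each such $M$ to obtain $\eta(L:M) = \dim(L/M)$. For the converse, I would assume $\eta(L:M) = \dim(L/M)$ for every maximal subalgebra $M$ of $L$, apply the reverse implication of Theorem \ref{t:cideal} to see that every maximal subalgebra of $L$ is a c-ideal, and then use (2) in the other direction to conclude that $L$ is solvable.

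There is no real obstacle here: all the structural work has already been absorbed into Theorem \ref{t:cideal} and into the c-ideal characterisation of solvability quoted from \cite{cideal}. The only thing worth checking carefully is that both implications of Theorem \ref{t:cideal} are being used, one in each direction, and that (2) is indeed a biconditional so that it may also be used in both directions. Thus the proof reduces to a short two-line chain of equivalences.
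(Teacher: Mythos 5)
Your proposal is correct and is essentially identical to the paper's own proof, which simply combines Theorem \ref{t:cideal} with the c-ideal characterisation of solvability from \cite[Theorem 3.1]{cideal} (item (2) of the introduction). Both implications are handled exactly as you describe.
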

\begin{proof} Simply combine Theorem \ref{t:cideal} and \cite[Theorem 3.1]{cideal}.
\end{proof}
\bigskip

With some restrictions on the underlying field the existence of a single solvable maximal subalgebra satisfying the above condition is sufficient to ensure that $L$ is solvable. the corresponding result for groups is \cite[Theorem 4]{bs}, though again its proof is completely different.
\bigskip

\begin{coro}\label{c:solv1} Let $L$ be a Lie algebra over a field $F$, where $F$ has characteristic zero or is algebraically closed of characteristic greater than 5. Then $L$ has a solvable maximal subalgebra $M$ with $\eta(L:M) = \dim(L/M)$ if and only if $L$ is solvable.
\end{coro}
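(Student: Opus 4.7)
The plan is to mirror the short proof of Corollary~\ref{c:solv} and reduce the statement, via Theorem~\ref{t:cideal}, to a corresponding existence result in \cite{cideal} about solvable maximal c-ideals.

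For the ``$\Leftarrow$'' direction I would take any maximal subalgebra $M$ of $L$ (assuming $L\neq 0$, otherwise there is nothing to prove). Since subalgebras of solvable Lie algebras are solvable, $M$ is solvable, and Corollary~\ref{c:solv} gives $\eta(L:M)=\dim(L/M)$ for free.

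For the ``$\Rightarrow$'' direction the hypothesis $\eta(L:M)=\dim(L/M)$ is converted by Theorem~\ref{t:cideal} into the statement that $M$ is a c-ideal of $L$. The remaining step is to cite the companion result in \cite{cideal} -- the one-solvable-maximal-c-ideal analogue of \cite[Theorem~3.1]{cideal} -- to conclude that $L$ itself is solvable.

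The main obstacle is contained in that cited result, where the field hypothesis is actually used. I would expect its proof to run as follows. By induction on $\dim L$, together with \cite[Lemma~2.1]{cideal} and Corollary~\ref{c:etafac}, one reduces to the case $M_L=0$; the c-ideal property then yields an ideal $C$ of $L$ with $L=M\oplus C$, and maximality of $M$ forces $C$ to be a minimal ideal. Since $L/C\cong M$ is solvable, it suffices to show $C$ itself is solvable. This is exactly where the restriction on $F$ is essential: in characteristic zero (via the Levi decomposition) or algebraically closed characteristic greater than $5$ (via the available structure theory for semisimple Lie algebras), such a non-abelian minimal ideal would have to be semisimple with inner derivation algebra, and the solvable maximal $M$ acting on $C$ by inner derivations together with $M\cap C=0$ would contradict the maximality of $M$.
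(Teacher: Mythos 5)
Your proposal is correct and matches the paper's proof, which likewise combines Theorem~\ref{t:cideal} with the existence results of \cite{cideal} (specifically Theorems~3.2 and~3.3 there, which state that a Lie algebra over such a field with a solvable maximal subalgebra that is a c-ideal is solvable). The additional sketch you give of how that cited result is proved is plausible but not part of the paper's argument, which simply cites it.
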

\begin{proof} Simply combine Theorem \ref{t:cideal} and \cite[Theorems 3.2 and 3.3]{cideal}.
\end{proof}
\bigskip

We also have the following characterisation of supersolvable Lie algebras in terms of the ideal index.
\bigskip

\begin{coro}\label{c:etass} The Lie algebra $L$ is supersolvable if and only if $\eta(L:M) = 1$ for all maximal subalgebras $M$ of $L$.
\end{coro}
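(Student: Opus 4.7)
The plan is to reduce both directions to results already recorded in the paper, the key observation being that $\eta(L:M)=1$ forces $M$ to have codimension $1$ in $L$.

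For the ``if'' direction, I would start by showing that whenever $\eta(L:M)=1$ one automatically has $\dim(L/M)=1$. Pick any ideal completion $C$ of $M$ (existence guaranteed by Lemma \ref{l:index}). Since $C$ is an ideal of $L$ not contained in the maximal subalgebra $M$, we have $L = M+C$, so $\dim(L/M) = \dim(C/(M\cap C))$. Now $k(C)\subseteq M\cap C$ because $k(C)\subseteq M$, and $M\cap C\subsetneq C$ because $C\not\subseteq M$; hence $C/(M\cap C)$ is a non-zero quotient of the one-dimensional space $C/k(C)$, forcing $\dim(L/M)=1$.

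With this in hand, the hypothesis $\eta(L:M)=1$ for every maximal $M$ gives $\eta(L:M)=\dim(L/M)$ for every maximal $M$, so $L$ is solvable by Corollary \ref{c:solv}. Since every maximal subalgebra of $L$ now has codimension one, fact (3) from the introduction (Barnes's theorem) yields that $L$ is supersolvable. The ``only if'' direction runs in reverse: if $L$ is supersolvable, then $L$ is solvable, so Corollary \ref{c:solv} gives $\eta(L:M)=\dim(L/M)$, and fact (3) gives $\dim(L/M)=1$, hence $\eta(L:M)=1$.

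There is no real obstacle here, since both directions piggy-back on Corollary \ref{c:solv} and Barnes's codimension-one characterisation. The only substantive step is the elementary dimension count in the first paragraph, which shows that $\eta$ cannot undershoot the codimension of $M$ once an ideal completion is chosen.
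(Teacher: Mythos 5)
Your proposal is correct and follows essentially the same route as the paper: both directions reduce to Corollary \ref{c:solv} together with Barnes's codimension-one characterisation of supersolvable solvable algebras, and your dimension count showing $\eta(L:M)=1$ forces $\dim(L/M)=1$ is just a rephrasing of the paper's observation that $C=k(C)+Fx$ gives $L=M+Fx$.
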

\begin{proof} Suppose first that $L$ is supersolvable and let $M$ be a maximal subalgebra of $L$. Then $L$ has codimension 1 in $L$, by \cite[Theorem 7]{barnes}, so $\eta(L:M) = 1$ by Corollary \ref{c:solv}.
\par
Now suppose that $L$ is any Lie algebra and that $M$ is a maximal subalgebra of $L$ with $\eta(L:M) = 1$. Then there is an ideal completion $C$ of $L$ with $\dim C/k(C) = 1$. Put $C = k(C) + Fx$. Then $L = M + Fx$ and $M$ has codimension 1 in $L$. But now $L$ is solvable, by Corollary \ref{c:solv}, and hence supersolvable, by \cite[Theorem 7]{barnes}.
\end{proof}
\bigskip

The {\em Frattini subalgebra} of $L$, $F(L)$, is the intersection of all of the maximal subalgebras of $L$. The {\em Frattini ideal}, $\phi(L)$, of $L$ is $F(L)_L$. The group-theoretic counterpart of the next result, which has a different proof, is \cite[Theorem 6]{bs}
\bigskip

\begin{theor}\label{t:ss} If $L$ has a supersolvable maximal subalgebra $M$ with $\eta(L:M) = 1$ and $N(L) \not \subseteq M$ then it is supersolvable.
\end{theor}
\begin{proof} Suppose first that $L$ is $\phi$-free. Then $N(L) = Asoc(L)$, by Theorem 7.4 of \cite{frat}, so there is a minimal abelian ideal $A$ of $L$ with $A \not \subseteq M$. Clearly $L = A \oplus M$. Moreover, $\dim A = 1$ as in Corollary \ref{c:etass} above, and so $L$ is supersolvable.
\par
If $L$ is not $\phi$-free then $\eta(L/\phi(L):M/\phi(L)) = 1$, by Corollary \ref{c:etafac}, and $N(L/\phi(L)) = N(L)/\phi(L)$, by \cite[Theorem 6.1]{frat}, so $L/\phi(L)$ satisfies the hypotheses of this theorem. It follows from the paragraph above that $L/\phi(L)$ is supersolvable. But then $L$ is supersolvable, by \cite[Theorem 6]{barnes}. 
\end{proof}
\bigskip

We say that the maximal subalgebra $M$ of $L$ has an {\em abelian ideal completion} if it has an ideal completion $C$ such that $C/k(C)$ is abelian. Then we have the following result, which is more straightforward to prove than its analogue in group theory: \cite[Theorem, page 237]{des2}.
\bigskip

\begin{theor}\label{t:ab} The Lie algebra $L$ is solvable if and only if every maximal subalgebra of $L$ has an abelian ideal completion.
\end{theor}
\begin{proof} Suppose first that $L$ is solvable and that $M$ is a maximal subalgebra of $L$. Then there exists $k \geq 2$ such that $L^{(k)} \subseteq M$ but $L^{(k-1)} \not \subseteq M$. Put $C = L^{(k-1)}$. Then $L = M + C$ so $C$ is an ideal completion of $M$. Also, $L^{(k)} \subseteq k(C)$, so $C/k(C)$ is abelian.
\par
Suppose now that every maximal subalgebra of $L$ has an abelian ideal completion, let $M$ be a maximal subalgebra of $L$ and let $C$ be an abelian ideal completion of $M$. Then $[C, M \cap C] \subseteq C^2 \subseteq k(C) \cap C \subseteq M \cap C$, so $M \cap C$ is an ideal of $L$. We infer that $M \cap C \subseteq M_L$ and hence that $M$ is a c-ideal of $L$. The result now follows from \cite[Theorem 3.1]{cideal}.  
\end{proof}
\bigskip

Put $F^*(L)$ equal to the intersection of all maximal subalgebras of $L$ which have no abelian ideal completion (with $F^*(L) = L$ if no such maximal subalgebras exist), and let $\phi^*(L) = F^*(L)_L$. Then we have the following characterisation of the radical of a Lie algebra, whose counterpart in group theory is \cite[Theorem B, page 238]{des2}; again our proof is easier.
\bigskip

\begin{theor}\label{t:rad} Let $L$ be any Lie algebra. Then $\phi^*(L) = R(L)$, the solvable radical of $L$.
\end{theor}
\begin{proof} Suppose first that $M$ is a maximal subalgebra of $L$ with $R(L) \not \subseteq M$. Let $C$ be minimal in the set of ideals of $L$ that are inside $R(L)$ but not in $M$. Then $C$ is an ideal completion of $M$ and $C/k(C)$ is a minimal solvable ideal and so abelian. Thus $R(L) \subseteq \phi^*(L)$.
\par
Next suppose that $R(L) = 0$. We wish to show that $\phi^*(L) = 0$. Suppose not, and let $A$ be a minimal ideal of $L$ with $A \subseteq \phi^*(L)$. Since $\phi(L) = 0$ (see \cite[Theorem 6.5]{frat}), there is a maximal subalgebra $M$ of $L$ with $A \not \subseteq M$, so $L = M + A$ and $M_L \cap A = 0$. 
\par
Now $F^*(L) \not \subseteq M$, so $M$ has an abelian ideal completion $C$. Clearly $L = M + C$ and $M_L \cap C = k(C)$. If $A \subseteq C$ then $A^2 \subseteq C^2 \cap A \subseteq k(C) \cap A \subseteq M_L \cap A = 0$ and $A$ is abelian, contradicting the fact that $R(L) = 0$. It follows that $A \cap C = 0$ whence $[A, C] = 0$.
\par
Put $K = \{ x \in L : [x,C] \subseteq M_L \}$. Then $M_L + C \subseteq K$. Furthermore, $[M \cap K, C] \subseteq [K, C] \subseteq M_L \cap K \subseteq M \cap K$, so $M \cap K$ is an ideal of $L$. This yields that $M \cap K \subseteq M_L$ from which $K = M \cap K + C \subseteq M_L + C$. We therefore have that $K = M_L + C$ and $M_L + A \subseteq M_L + C$. But now
$$
A \cong \frac{A}{M_L \cap A} \cong \frac{M_L + A}{M_L} \subseteq \frac{M_L + C}{M_L} \cong \frac{C}{M_L \cap C} = \frac{C}{k(C)}
$$
and so $A$ is abelian. This is impossible, since $R(L) = 0$, so $\phi^*(L) = 0$.
\end{proof}
\bigskip

\section{Maximal completions}
\medskip

The set $I(M)$ is partially ordered by set inclusion; call maximal elements of $I(M)$ {\em maximal completions} of $M$ in $L$. Clearly every ideal completion of $M$ in $L$ is a maximal completion of $M$ in $L$, but the converse is not true in general: for example, if $L$ is the two-dimensional non-abelian Lie algebra with basis $x, y$ in which $[x, y] = y$ and $M = F(x + y)$, then $Fx$ is a maximal completion of $M$ in $L$ but is not an ideal of $L$.
\par

Here we are seeking an analogue to a result of Deskins (see \cite{des2}): namely, that if a Lie algebra has a maximal completion $C$ with $C/k(C)$ abelian then it has an abelian ideal completion. As we shall see, this result holds only with conditions on the underlying field. First we consider the structure of Lie algebras with a maximal abelian subalgebra. A Lie algebra $L$ is {\em completely solvable} if $L^{(1)}$ is nilpotent.
\bigskip 

\begin{propo}\label{p:compsolv} Let $L$ be a completely solvable Lie algebra. Then $L$ has a maximal subalgebra $M$ that is abelian if and only if either
\begin{itemize}
\item[(i)] $L$ has an abelian ideal of codimension one in $L$; or
\item[(ii)] $L^{(2)} = \phi(L) = Z(L)$, $L^{(1)}/L^{(2)}$ is a chief factor of $L$, and $L$ splits over $L^{(1)}$.
\end{itemize}
\end{propo}
\begin{proof} Suppose first that $L$ has a maximal subalgebra $M$ that is abelian. If $M$ is an ideal of $L$ we have case (i). So suppose that $M$ is self-idealising, in which case it is a Cartan subalgebra of $L$. Now $L^{(2)} \subseteq \phi(L) \subseteq M$, by \cite[Theorem 6.5]{frat}. If $S$ is a subalgebra of $L$ denote by $\bar{S}$ its image under the canonical homomorphism onto $L/L^{(2)}$. Then $\bar{M}$ is a Cartan subalgebra of $\bar{L}$ and $\bar{L}$ has a Fitting decomposition $\bar{L} = \bar{M} \oplus \bar{L}_1$. Now $\bar{L}_1 \subseteq \bar{L}^{(1)} = \overline{L^{(1)}}$ which is abelian, so $\bar{L}_1$ is an ideal of $\bar{L}$. Moreover, since $\bar{M}$ is a maximal subalgebra of $\bar{L}$, $\bar{L}_1$ is a minimal ideal of $\bar{L}$ and $\overline{L^{(1)}} = \bar{L}_1$. It follows that $L^{(1)}/L^{(2)}$ is a chief factor of $L$. Clearly $\phi(\bar{L}) = 0$, whence $\phi(L) = L^{(2)}$. Also $L = M + L^{(1)}$, so letting $C$ be a subspace of $M$ such that $M = C \oplus (M \cap L^{(1)})$ we see that $C$ is a subalgebra of $L$ and $L$ splits over $L^{(1)}$. Finally, $[M,L^{(2)}] \subseteq M^{(1)} = 0$, so $M \subseteq C_L(L^{(2)})$. Since $M$ is a self-idealising maximal subalgebra of $L$ and $C_L(L^{(2)})$ is an ideal of $L$, we have $C_L(L^{(2)}) = L$, whence $L^{(2)} = Z(L)$.
\par
Consider now the converse. If (i) holds the converse is clear. So suppose that (ii) holds. Then $L = C \oplus L^{(1)}$ where $C$ is an abelian subalgebra of $L$. Put $M = C + L^{(2)}$, so $M$ is clearly abelian. Let $M \subseteq T \subseteq L$. Then $\bar{M} \subseteq \bar{T} \subseteq \bar{L}$. But $\bar{L} = \bar{M} \oplus \overline{L^{(1)}}$ and $\overline{L^{(1)}}$ is a minimal abelian ideal of $\bar{L}$. So $\bar{M} \neq \bar{T}$ implies that $\bar{T} = \bar{L}$. It follows that $M$ is a maximal subalgebra of $L$.  
\end{proof} 
\bigskip

\begin{propo}\label{p:maxab} Let $L$ be a Lie algebra over an algebraically closed field $F$ of any characteristic. Then $L$ has a maximal subalgebra $M$ that is abelian if and only if either
\begin{itemize}
\item[(i)] $L$ has an abelian ideal of codimension one in $L$; or
\item[(ii)] $L^{(1)}$ has dimension one and $\phi(L) = 0$.
\end{itemize}  In either case, $L$ is completely solvable. 
\end{propo}
\begin{proof} Suppose first that $L$ has a maximal subalgebra $M$ that is abelian. If $M$ is an ideal of $L$ we have case (i). So suppose that $M$ is self-idealising, in which case it is a Cartan subalgebra of $L$. Let $L = M \oplus L_{1}(M)$ be the Fitting decomposition of $L$ relative to $M$. Then $\{({\rm ad}\,m)\mid_{L_{1}(M)}: m \in M\}$ is a set of simultaneously triangulable linear mappings. So, there exists $0 \neq b \in L_{1}(M)$ such that $[m, b] 
= \alpha(m) b$ for every $m \in M$, where $\alpha(m) \in F$. Then we 
have that $M + Fb$ is a subalgebra of $L$ strictly containing $M$, whence $M \oplus Fb = L$. But now $L^{(1)} = Fb$, $\phi(L) \subseteq M \cap L^{(1)} = 0$ and we have case (ii).
\par
Consider now the converse. If (i) holds the converse is clear. So suppose that (ii) holds. Then $L = L^{(1)} \oplus M$ for some subalgebra $M$ of $L$, by \cite[Lemma 7.2]{frat}. Clearly $M$ is an abelian maximal subalgebra of $L$. 
\end{proof}
\bigskip

\begin{theor}\label{t:maxcomp} Let $L$ be a Lie algebra over an algebraically closed field and let $M$ be a maximal subalgebra of $L$ with a maximal completion $C$ such that $C/k(C)$ is abelian. Then $M$ has an abelian ideal completion in $L$.
\end{theor}
\begin{proof} Let $L$ be a Lie algebra of smallest dimension for which the result is false, and let $M$ be a maximal subalgebra of $L$ with an abelian maximal completion $C$ but no abelian ideal completion. If $L$ is simple then $k(C) = 0$ and the maximality of $C$ in $I(M)$ implies that $C = L$, which is impossible.
\par
If $M$ is an ideal of $L$ choose $A$ to be minimal in the set of ideals of $L$ (not necessarily proper) that are not contained in $M$. Then $A$ is an ideal completion of $M$ in $L$, $A \cap M = k(A)$ and $A/k(A) \cong L/M$ which is one dimensional and hence abelian. 
\par
Suppose now that $M_L = 0$. Then $k(C) = 0$ and we can assume that $C$ is not an ideal of $L$. Now $L$ contains a subalgebra $D$ in which $C$ is a maximal subalgebra, and $D$ is solvable, by Proposition \ref{p:maxab}. The result certainly holds if $L$ is solvable, by Theorem \ref{t:ab}, so we can assume that $D \neq L$. It follows from the maximality of $C$ in $I(M)$ that $D \notin I(M)$. Since $<M, D> = L$ we must have that $k(D) \not \subseteq M$. Thus there is a proper subalgebra $K$ of $D$ such that $K$ is an ideal of $L$ and $L = M + K$. Let $A$ be a minimal non-trivial ideal of $L$ inside $D$. Then $M + A = L$, $k(A) = 0$ and $A$ is abelian because $D$ is solvable. This means that $A$ is an abelian ideal completion of $M$. 
\par
Suppose next that $M_L = K \neq 0$ and $K \neq k(C)$, and consider the subalgebra $K + C \neq C$. Then $<M, K + C>\,\, = L$, but $K + C \notin I(M)$ because of the maximality of $C$ in $I(M)$. It follows that $k(K + C) \not \subseteq M$, and so the collection of ideals of $L$ inside $K + C$ but not in $M$ is non-empty. Let $A$ be a minimal element of this set. Clearly $A$ is an ideal completion of $M$ in $L$. Also $(K + C)/K \cong C/C \cap K$ is abelian, since $C/k(C)$ is abelian and $k(C) \subseteq C \cap K$, so $A/k(A) = A/K \cap A \cong (K + A)/K$ is abelian, because $A \subseteq K + C$.
\par
Finally suppose that $M_L = K \neq 0$ and $K = k(C)$. If $S$ is a subalgebra of $L$ denote by $\bar{S}$ its image under the canonical homomorphism onto $L/K$. Then $\bar{C}$ is a completion of $\bar{M}$ in $\bar{L}$ and $\bar{M}$ is core-free. If $\bar{C}$ is a maximal element of $I(\bar{M})$, then, by the paragraph above, $\bar{M}$ has an abelian ideal completion $\bar{A}$ in $\bar{L}$. Then $A$ is an ideal completion of $M$ in $L$ and $A/k(A) = A/K \cap A \cong (A + K)/K \cong \bar{A}$ is abelian.
\par
If $\bar{C}$ is not a maximal element of $I(\bar{M})$ then let $\bar{D}$ be minimal amongst those elements of $I(\bar{M})$ which contain $C$ properly. Clearly $C$ is a maximal subalgebra of $D$, so $D$ is not a completion of $M$ in $L$, by the maximality of $C$. Hence $k(D) \not \subseteq M$. Choose $A$ to be a minimal element in the (non-empty) collection of ideals of $L$ lying in $D$ but not in $M$. Then $A$ is an ideal completion of $M$. Also, $D/K$ is solvable, by Proposition \ref{p:maxab}, since $C/K$ is maximal in $L/K$. Moreover, $L/K \supseteq (A + K)/K \cong A/A \cap K$ and $A \cap K = k(A)$, so the chief factor $A/k(A)$ of $L$ is solvable, and thus abelian.
\end{proof}
\bigskip

\begin{coro}\label{c:maxcomp} Let $L$ be a Lie algebra over an algebraically closed field. Then $L$ is solvable if and only if every maximal subalgebra of $L$ has a maximal completion $C$ in $L$ with $C/k(C)$ abelian.
\end{coro}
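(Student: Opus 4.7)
The plan is to prove this corollary simply by combining two results already established in the excerpt: Theorem \ref{t:ab} and Theorem \ref{t:maxcomp}. No new technical work is needed, so the proof will be short.

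For the forward direction, I would assume $L$ is solvable and take any maximal subalgebra $M$ of $L$. By Theorem \ref{t:ab}, $M$ has an abelian ideal completion $C$, meaning $C \in I(M)$ is an ideal of $L$ with $C/k(C)$ abelian. As noted in the opening paragraph of Section 3, every ideal completion is automatically a maximal completion (any $D \in I(M)$ with $C \subseteq D$ would force $D \cap M$ to be an ideal contained in $D$ and hence in $k(D) \subseteq M$, etc.), so $C$ itself is the required maximal completion with abelian $C/k(C)$. This direction does not require the field to be algebraically closed.

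For the reverse direction, I would assume every maximal subalgebra $M$ of $L$ has a maximal completion $C \in I(M)$ with $C/k(C)$ abelian. This is precisely the hypothesis of Theorem \ref{t:maxcomp}, so applying it yields that every such $M$ possesses an abelian ideal completion. Then Theorem \ref{t:ab} immediately gives that $L$ is solvable. This is the direction where the algebraically closed assumption is essential, since it is inherited from Theorem \ref{t:maxcomp} (which in turn depends on Proposition \ref{p:maxab}).

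There is essentially no obstacle; the corollary is a direct bookkeeping consequence of the two cited theorems, and the only thing to verify carefully is that the observation "every ideal completion is a maximal completion'' made in the opening paragraph of Section 3 is applied correctly in the forward direction.
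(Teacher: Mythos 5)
Your proposal is correct and matches the paper's proof exactly: the paper also derives this corollary directly by combining Theorem \ref{t:ab} with Theorem \ref{t:maxcomp}, using the observation from the opening of Section 3 that every ideal completion is a maximal completion. No further comment is needed.
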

\begin{proof} This follows from Theorems \ref{t:ab} and \ref{t:maxcomp}.
\end{proof}
\bigskip 

Note that Proposition \ref{p:maxab} and Theorem \ref{t:maxcomp} do not hold when the underlying field $F$ is not algebraically closed, even if it has characteristic zero, as the following example shows.
\par

\begin{ex} Let $S$ have basis $e_1, e_2, e_3$ with $[e_1,e_2] = -[e_2,e_1] = e_3$, $[e_2,e_3] = -[e_3,e_2] = -e_1$, $[e_3,e_1] = -[e_1,e_3] = e_2$, all other products being zero (so $L$ is three-dimensional non-split simple), let $\bar{S}$ be an isomorphic copy of $S$ and denote the image of $s \in S$ in $\bar{S}$ by $\bar{s}$. Put $L = S \oplus \bar{S}$ with $[S, \bar{S}] = 0$. 
\end{ex}
Every maximal subalgebra of $S$ is one dimensional, and so abelian, showing that Proposition \ref{p:maxab} does not hold. Also, it is easy to check that the diagonal subalgebra $M = \{ x \in L : x = s + \bar{s} \hbox{ for some } s \in S \}$ is maximal in $L$ and that $C = Fe_1 + F\bar{e_1}$ is a maximal abelian completion of $M$ in $L$. However, $M$ has no maximal abelian ideal completion in $L$.
\bigskip

\section{Subideal completions}
\medskip

When the underlying field is algebraically closed of characteristic zero, the following characterisation of supersolvable Lie algebras in terms of maximal completions follows easily from Theorem \ref{t:maxcomp}, Corollary \ref{c:etass} and Lie's Theorem.
\bigskip

\begin{propo}\label{p:supsolv} Let $L$ be a Lie algebra over an algebraically closed field of characteristic zero. Then $L$ is supersolvable if and only if every maximal subalgebra of $L$ has a maximal completion $C$ with $\dim C/k(C) = 1$.
\end{propo}
\bigskip

The above result does not hold, however, over every field of characteristic zero, as the following example shows.
\bigskip

\begin{ex} Let $L = Fa \oplus S$, where $Fa$ is a one-dimensional abelian ideal of $L$ and $S$ is a three-dimensional non-split simple ideal of $L$. Then the maximal subalgebras of $L$ are $S$ and the subalgebras of the form $Fa + Fx$ where $x \in S$. For the former a maximal completion is $C = Fa$; for the latter a maximal completion is $C = Fa + Fy$ where $y \in S \setminus Fx$. In either case $\dim C/k(C) = 1$, but $L$ is not supersolvable.
\end{ex} 
\bigskip

So, in order to find another characterisation of supersolvable Lie algebras over a general field in terms of the index complex we follow \cite{be}. If $C$ is a subideal of $L$ and $C \in I(M)$ we call $C$ a {\em subideal completion} of $L$. Unlike ideal completions there is no numerical invariant associated with subideal completions, as the following example shows.
\par

\begin{ex} Let $L$ be the Lie algebra with basis $a, b, c,d$ and products $[a,b] = - [b,a] = c, [b,c] = - [c,b] = d$, all other products being zero. Then $L$ is nilpotent and so all of its subalgebras are subideals of $L$. Put $M = Fb + Fc + Fd$. Then $M$ is a maximal subalgebra of $L$ and it is easy to check that $C_1 = Fa$ and $C_2 = Fa + Fc$ are both completions of $M$ in $L$. However, $k(C_1) = k(C_2) = 0$, so $\dim(C_1/k(C_1)) = 1$, whereas $\dim(C_2/k(C_2)) = 2$.
\end{ex}
\bigskip

In the following we will need subideal completions with an extra property. Let $M$ be a maximal subalgebra of the Lie algebra $L$ and let $S(M) = \{ C \in I(M) : C \hbox{ is a subideal of } L \hbox{ and } L = M + C \}$. Clearly, every ideal completion of $M$ in $L$ is in $S(M)$, so $S(M)$ is non-empty. 
\par

Then we have the following characterisation of supersolvable Lie algebras in terms of such completions which is analogous to \cite[Theorem 1]{be}. To use a similar argument to theirs we would require an underlying field of characteristic zero, as it relies on the Baer radical being a nilpotent ideal (see \cite{as}). The argument given below is independent of the underlying field.
\bigskip

\begin{theor}\label{t:subideal} The Lie algebra $L$ is supersolvable if and only if every maximal subalgebra of $L$ has an element $C \in S(M)$ with $\dim C/k(C) = 1$.
\end{theor}
\begin{proof} Suppose first that every maximal subalgebra $M$ of $L$ has an element $C \in S(M)$ with $\dim C/k(C) = 1$. It is clear that $L/\phi(L)$ satisfies the same hypotheses. Moreover, if $L/\phi(L)$ is supersolvable then so is $L$, by \cite[Theorem 6]{barnes}, so we can assume that $L$ is $\phi$-free. Since $k(C) \subseteq M$, we have that $M$ has codimension one in $L$. It follows from \cite{codone} that $L \cong R \oplus S$, where $R$ is a supersolvable ideal of $L$ and $S$ is a three-dimensional simple ideal of $L$. 
\par

Suppose that $S \neq 0$, let $M$ be a maximal subalgebra of $L$ containing $R$, let $C \in S(M)$ with $\dim C/k(C) = 1$ and let $\pi : L \rightarrow S$ be the projection homomorphism from $L$ onto $S$. Then $\pi(C)$ is a subideal of $S$, so $\pi(C) = S$ or $0$. Since $\pi(k(C)) = 0$ the former is impossible. The latter implies that $C \subseteq R$ which is also impossible. It follows that $S = 0$ and $L$ is supersolvable.
\par

Conversely, suppose that $L$ is supersolvable. Then $L$ has an ideal completion $C$ (and so $C \in S(M)$) with $\dim C/k(C) = 1$ by Corollary \ref{c:etass}. 
\end{proof} 
\bigskip

A class ${\mathcal H}$ of finite-dimensional solvable Lie algebras is called a {\em homomorph} if it contains, along with an algebra $L$, all epimorphic images of $L$. The following result is a straightforward adaptation of \cite[Proposition 1]{be}. We include the proof for the convenience of the reader.
\bigskip

\begin{propo}\label{p:hom} Let ${\mathcal H}$ be a homomorph that is closed under ideals, let $M$ be a maximal subalgebra of $L$ and let $N$ be an ideal of $L$ such that $N \subseteq M$. If $C$ is a maximal (respectively, subideal) completion of $M$ in $L$ with $C/k(C) \in {\mathcal H}$, there is a maximal (respectively, subideal) completion $C^*$ of $M$ in $L$ such that $N \subseteq C^*$ and $C^*/k(C^*) \in {\mathcal H}$. 
\end{propo}
\begin{proof} Assume that $M$ has a maximal completion $C$ in $L$ with $C/k(C) \in {\mathcal H}$. If $N \subseteq C$ we can take $C^* = C$, so assume that $N \not \subseteq C$. Since $C$ is a maximal completion of $M$ and $C \subset N + C$ we have $k(N + C) \not\subseteq M$. Hence there is a chief factor $C^*/P$ of $L$ such that $N + k(C) \subseteq P \subset C^* \subseteq k(N + C)$ and $L = M + C^*$. Thus $C^*$ is a maximal completion of $M$ and 
$$ \frac{k(N + C)}{N + k(C)} \hbox{ is an ideal of } \frac{N + C}{N + k(C)} \in {\mathcal H}, \hbox{ whence } \frac{k(N + C)}{N + k(C)} \in {\mathcal H}.$$
Consequently, $C^*/P \in {\mathcal H}$. Since $k(C^*) = P$, $C^*$ is a maximal completion of $M$ such that $N \subseteq C^*$.
\par

For subideal completions the argument is similar.
\end{proof}
\bigskip

Let ${\mathcal H}$ be as in the above Proposition and let $I({\mathcal H}) = \{L :$ for each maximal subalgebra $M$ of $L$ there exists $C \in I(M)$, maximal in $I(M)$, with $C/k(C) \in {\mathcal H}\}$, $S({\mathcal H}) = \{L :$ for each maximal subalgebra $M$ of $L$ there exists $C \in S(M)$ with $C/k(C) \in {\mathcal H}\}$. Then, as a basis for induction arguments, it is claimed in \cite[Corollary 2]{be} that, in respect of the corresponding concepts for groups, $I({\mathcal H})$ and $S({\mathcal H})$ are saturated homomorphs. However, if $K = \{e,a,b,c\}$ is Klein's 4-group and ${\mathcal H}$ is the homomorph of groups of order one, then it is easy to see that $K \in I({\mathcal H})$, whereas $K/\{e,a\} \not \in I({\mathcal H})$, since it has order two and is the only maximal completion for its trivial maximal subgroup. Less trivial examples are also easy to construct.
\par

In the case of Lie algebras $S({\mathcal H})$ is also a saturated homomorph, but $I({\mathcal H})$ is not. For, if $L$ is as in Example 4.1 above and ${\mathcal H}$ is the homomorph of all abelian Lie algebras, then $L \in I({\mathcal H})$ but $L/Fa \cong S \not \in I({\mathcal H})$, since $S$ itself is the only maximal completion for any of its maximal subalgebras and $S/k(S) \cong S$ is not abelian. 
\bigskip

\end{document}